\numberwithin{equation}{section}
\newtheoremstyle{thm}
{9pt}
{9pt}
{\itshape}
{}
{\bfseries}
{.}
{ }
{}
\theoremstyle{thm}
\newtheorem{theorem}{Theorem}[section]
\newtheoremstyle{def}
{9pt}
{9pt}
{}
{}
{\bfseries}
{.}
{ }
{}
\theoremstyle{def}
\newcommand{\R}{\mathbb{R}} 
\renewcommand{\footnoterule}{%
	\kern -3.5pt
	\hrule width \textwidth height 1pt
	\kern 3.5pt
}
\def\blfootnote{\xdef\@thefnmark{}\@footnotetext}
\begin{document}

\title{\bf  On the eigenvalues associated with the limit null distribution of the Epps-Pulley test of normality}


\author{Bruno Ebner and Norbert Henze}

\date{\today}
\maketitle

\blfootnote{ {\em MSC 2010 subject
classifications.} Primary 62F03 Secondary 65C60; 65R20}
\blfootnote{
{\em Key words and phrases}: Test for normality; integral operator; Fredholm determinant }

\begin{abstract}
The Shapiro--Wilk test (SW) and the Anderson--Darling test (AD) turned out to be strong procedures for testing for normality. They are joined by a class
of tests for normality proposed by Epps and Pulley that, in contrary to SW and AD, have been extended by Baringhaus and Henze to yield easy-to-use
affine invariant and universally consistent tests for normality in any dimension. The limit null distribution of the Epps--Pulley test involves
a sequences of eigenvalues of a certain integral operator induced by the covariance kernel of the limiting Gaussian process. We solve the associated integral equation and present the corresponding eigenvalues.
\end{abstract}
\noindent
\section{Introduction}\label{sec:Intro}
Let $X,X_1, X_2 \ldots$ be a sequence of independent and identically distributed (i.i.d)  random variables with unkown distribution. To test the hypothesis
$H_0$ that the distribution of $X$ is some unspecified normal distribution, there is a myriad of testing procedures, among which the tests of Shapiro--Wilk (SW) and
Anderson--Darling (AD) deserve special mention, see, e.g., the monographs \cite{DAS:1986,T:2002}. There is, however,
a further test which was proposed by Epps and Pulley (\cite{EP:1983}). This test, which is based on the empirical characteristic function, comes as a serious competitor to SW and AD, as shown
in simulation studies (see, e.g., \cite{BDH:1989,BE:2020a}). Baringhaus and Henze (\cite{BH:1988}) extended the approach of Epps and Pulley to test for normality in any dimension.
By now, the BHEP-test (an acronym coined by S. Cs{\"o}rg\H{o} \cite{CS:1989}) after earlier developers of the idea) is known to be an affine-invariant and universally consistent
test of normality in any dimension, and limit distributions of the test statistic have been obtained under $H_0$ as well as under fixed and contiguous alternatives to normality
(see the review article \cite{EH:2020}).  In this paper, we revisit the limit null distribution of the Epps-Pulley-test statistic in the univariate case.
The test statistic involves a positive tuning parameter $\beta$, and, based on $X_1,\ldots,X_n$, is denoted by $T_{n,\beta}$. It is given by
\[
T_{n,\beta} = n \int_{-\infty}^\infty \Big{|} \psi_n(t) - {\rm e}^{-t^2/2}\Big{|}^2 \, \varphi_\beta(t)\, {\rm d}t,
\]
where $\psi_n(t) = n^{-1}\sum_{j=1}^n \exp\big({\rm i}tY_{n,j}\big)$ is the empirical characteristic function of
the {\em scaled residuals} $Y_{n,1}, \ldots, Y_{n,n}$. Here, $Y_{n,j} = S_n^{-1} (X_j - \overline{X}_n)$, $j=1,\ldots,n$,
and $\overline{X}_n = n^{-1} \sum_{j=1}^n X_j$, $S_n^2 = n^{-1}\sum_{j=1}^n (X_j- \overline{X}_n)^2$ denote the sample mean and
the sample variance of $X_1,\ldots,X_n$, respectively. Moreover,
\[
\varphi_\beta(t) = \frac{1}{\beta \sqrt{2\pi}} \exp \Big( - \frac{t^2}{2\beta^2 } \Big), \quad t \in \mathbb{R},
\]
is the density of the centred normal distribution with variance $\beta^2$. A closed-form expression of $T_{n,\beta}$ that is amenable to
computational purposes is
\begin{equation*}\label{deftnb}
T_{n,\beta} = \frac{1}{n} \sum_{j,k=1}^n \exp \! \bigg( \! \! - \frac{\beta^2}{2}\big(Y_{n,j}\! - \! Y_{n,k}\big)^2 \! \bigg) \!
 - \frac{2}{\sqrt{1 \! + \! \beta^2}} \sum_{j=1}^n \exp \! \bigg( \! \! - \frac{\beta^2 Y_{n,j}^2 }{2(1\! + \! \beta^2)} \! \bigg)
 + \frac{n}{\sqrt{1\! + \! 2\beta^2}}.
\end{equation*}
The limit null distribution of $T_{n,\beta}$, as $n \to \infty$, is that of
\[
T_\infty :=  \int_{-\infty}^\infty Z^2(t) \, \varphi_\beta(t) \, \text{d}t.
\]
Here, $Z(\cdot)$ is a centred Gaussian element of the Hilbert space $\text{L}^2 = \text{L}^2(\mathbb{R},{\cal B},\varphi_\beta(t)\text{d}t)$ of Borel-measurable
real-valued functions that are square-integrable with respect to $\varphi_\beta(t)\text{d}t$, and the covariance function of $Z(\cdot)$ is given by
\begin{equation}\label{eq:KHW}
K(s,t)=\exp\left(-\frac{(s-t)^2}2\right)-\left(1+st+\frac{(st)^2}2\right)\exp\left(-\frac{s^2}2-\frac{t^2}2\right),\quad s,t\in\R
\end{equation}
(see \cite{HW:1997}). The kernel $K$ is the starting point of this paper.
Writing $\sim$ for equality in distribution,  it is well-known that
\[
T_\infty \sim \sum_{j=1}^\infty \lambda_j N_j^2,
\]
where $\lambda_1, \lambda_2, \ldots $ is the sequence on nonzero eigenvalues associated with the integral operator $\mathbb{A}: \text{L}^2 \rightarrow \text{L}^2$
defined by
\[
(\mathbb{A}f)(s) := \int_{-\infty}^\infty K(s,t) f(t) \varphi_\beta(t) \, \text{d} t,
\]
and $N_1, N_2, \ldots $ is a sequence of i.i.d. standard normal random variables.  In the next section, we obtain the eigenvalues of $\mathbb{A}$ by numerical methods. In Section \ref{sec:Acc} the sum of powers of the largest eigenvalues is compared to normalized cumulants. The difference should be close to 0 if the eigenvalues have been computed correctly. Section \ref{sec:Pearson} demonstrates that the results can be applied to fit a Pearson system of distributions, and that the fit is reasonable to approximate critical values of the Epps-Pulley test. The article ends by some concluding remarks.

\noindent
\section{Solution of a Fredholm integral equation}\label{sec:fredholm}
To obtain the values $\lambda_1, \lambda_2, \ldots$,  that determine the distribution of $T_\infty$, one has to solve the integral equation
\begin{equation*}
\int_{-\infty}^\infty K(s,t)f(t) \varphi_\beta (t)\,\text{d}t=\lambda f(s),\quad s\in\R.
\end{equation*}
In general, this is considered a hard problem,  and solutions for kernels associated with testing problems involving
composite hypotheses are very sparse, see \cite{S:1976,S:1977} for the classical tests of normality and exponentiality that are based on the empirical distribution function.
In what follows, we use a result of \cite{ZWRM:1997} to obtain the eigenvalues of $\mathbb{A}$ by a stable numerical method.
To this end, let
\begin{equation*}
K_0(s,t)=\exp\left(-\frac{(s-t)^2}2\right),\quad s,t\in\R,
\end{equation*}
and
\begin{eqnarray*}
\phi_1(s)&=&\frac{s^2}{\sqrt{2}}\exp\left(-\frac{s^2}2\right),\\
\phi_2(s)&=&s\exp\left(-\frac{s^2}2\right),\\
\phi_3(s)&=&\exp\left(-\frac{s^2}2\right),\quad s\in\R.
\end{eqnarray*}
Notice that
\begin{equation}\label{eq:kernelrep}
K(s,t)=K_0(s,t)-\sum_{j=1}^3\phi_j(s)\phi_j(t),\quad s,t\in\R.
\end{equation}
The first step is to solve the eigenvalue problem for the covariance kernel $K_0$, which corresponds to the limit distribution of the
a modified statistic $T_{n,\beta}^{(0)}$, which originates from $T_{n,\beta}$ by replacing $\psi_n(t)$ with $\psi_n^{(0)}(t) =  n^{-1}\sum_{j=1}^n \exp\big({\rm i}tX_j\big)$,
i.e., the problem is to test for standard normality and thus no estimation of parameters is involved. The associated eigenvalue problem, which
leads to the kernel $K_0$, was
 solved in the context of machine learning in Chapter 4 of \cite{ZWRM:1997}. Here, we use the formulation given in \cite{RW:2008}, Subsection 4.3.1. In our case, we have
\begin{equation*}
\lambda_k^{(0)}=\sqrt{\frac2{\sqrt{1+4\beta^2}+2\beta^2+1}}\left(\frac{2\beta^2}{\sqrt{\sqrt{1+4\beta^2}+2\beta^2+1}}\right)^k,\quad k=0,1,2,\ldots,
\end{equation*}
with corresponding normalized eigenfunctions
\begin{equation*}
\psi_k(x)= h_k \exp\left(-\left(\frac{\sqrt{\beta^{-2}+4}}{4\beta}-(4\beta)^{-1}\right)x^2\right) H_k\left(\left(\beta^{-4}+4\beta^{-2}\right)^{1/4}x/\sqrt{2}\right),\quad k=0,1,2,\ldots
\end{equation*}
(see also the errata to \cite{RW:2008} on the books homepage).
Here, $h_k^{-2} = (4\beta^2+1)^{-1/4} 2^k k!$, and $H_k(x)=(-1)^k\exp(x^2)\frac{\mbox{d}^k}{\mbox{d}x^k}\exp(-x^2)$ is the $k$th order Hermite polynomial.
To solve the eigenvalue problem of $\mathbb{A}$ figuring in \eqref{eq:KHW}, we adapt the methodology in \cite{S:1976}. Define
\begin{eqnarray*}
a_{j,k}&=&\int_{-\infty}^\infty \psi_j(x)\phi_k(x)\varphi_\beta(x)\;\mbox{d}x,\\
S_{k}(\lambda)&=&1+\sum_{j=0}^\infty\frac{a_{j,k}^2}{1/\lambda- \lambda_k^{(0)}},\\
S_{k,\ell}(\lambda)&=&\sum_{j=0}^\infty\frac{a_{j,k}a_{j,\ell}}{1/\lambda-\lambda_k^{(0)}},\quad\lambda>0.
\end{eqnarray*}
With this notation, we can formulate our main result.
\begin{theorem}\label{thm:Eigprob}
The eigenvalues of $\mathbb{A}$ are the reciprocals of the solutions $\lambda>0$ of the equation
\begin{equation}\label{eq:FredDet}
D(\lambda)=d(\lambda)S_2(\lambda)\left(S_{1}(\lambda)S_3(\lambda)-S_{1,3}^2(\lambda)\right)=0,
\end{equation}
where $d(\lambda)=\prod_{k=0}^\infty(1/\lambda-\lambda_k)$ is the Fredholm determinant connected to the eigenvalue problem of $K_0$. Moreover, none of the reciprocals of the eigenvalues $\lambda_k^{(0)}$ of $K_0$ solve equation \eqref{eq:FredDet}.
\end{theorem}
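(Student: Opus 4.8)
The plan is to turn the integral equation into a finite–dimensional problem by exploiting the rank–three structure in \eqref{eq:kernelrep}. Write $F=\sum_{j=1}^3\phi_j\langle\phi_j,\cdot\rangle$, so that $\mathbb{A}=\mathbb{A}_0-F$, where $\mathbb{A}_0$ is the integral operator with kernel $K_0$. First I would fix $\lambda>0$ with $1/\lambda\notin\{\lambda_0^{(0)},\lambda_1^{(0)},\dots\}$, so that $\mathbb{A}_0-\tfrac1\lambda I$ is invertible, and factor $\mathbb{A}-\tfrac1\lambda I=(\mathbb{A}_0-\tfrac1\lambda I)\bigl(I-(\mathbb{A}_0-\tfrac1\lambda I)^{-1}F\bigr)$. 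Since $(\mathbb{A}_0-\tfrac1\lambda I)^{-1}F$ has rank three, $1/\lambda$ is an eigenvalue of $\mathbb{A}$ if and only if the $3\times 3$ matrix $I_3-M(1/\lambda)$ is singular, where $M_{k,\ell}(\mu)=\langle\phi_k,(\mathbb{A}_0-\mu I)^{-1}\phi_\ell\rangle$. Expanding $\phi_k=\sum_{j\ge0}a_{j,k}\psi_j$ in the orthonormal eigenbasis $(\psi_j)$ of $\mathbb{A}_0$ gives $M_{k,\ell}(\mu)=\sum_{j\ge0}a_{j,k}a_{j,\ell}/(\lambda_j^{(0)}-\mu)$, hence $1-M_{k,k}(1/\lambda)=S_k(\lambda)$ and $-M_{k,\ell}(1/\lambda)=S_{k,\ell}(\lambda)$.

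The crucial simplification is a parity argument. Each $\psi_j$ is a Gaussian times $H_j$, hence even for even $j$ and odd for odd $j$; moreover $\phi_1,\phi_3$ and $\varphi_\beta$ are even while $\phi_2$ is odd. Therefore $a_{j,2}=0$ for even $j$ and $a_{j,1}=a_{j,3}=0$ for odd $j$, so $S_{1,2}\equiv S_{2,3}\equiv0$ and $I_3-M(1/\lambda)$ is block–diagonal with determinant $S_2(\lambda)\bigl(S_1(\lambda)S_3(\lambda)-S_{1,3}^2(\lambda)\bigr)$. Multiplying by the Fredholm determinant $d(\lambda)$ of $\mathbb{A}_0$ — nonzero at the $\lambda$ under consideration — and noting that the zeros of $d$ cancel the poles of the $S$–factors, one sees that $D(\lambda)=d(\lambda)\det(I_3-M(1/\lambda))$ extends analytically across each point $1/\lambda_k^{(0)}$ and is, up to normalization, the Fredholm determinant of $\mathbb{A}$; in particular its zeros are precisely the reciprocals of the nonzero eigenvalues of $\mathbb{A}$. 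This establishes the first assertion for all $\lambda$ with $1/\lambda\notin\{\lambda_k^{(0)}\}$, and the last assertion takes care of the remaining points.

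For the last assertion it is enough to show that no $\lambda_k^{(0)}$ is an eigenvalue of $\mathbb{A}$. Suppose $\mathbb{A}f=\lambda_k^{(0)}f$ with $f\ne0$. Since the $\lambda_j^{(0)}$ form a strictly decreasing (hence simple) sequence, $\mathbb{A}_0-\lambda_k^{(0)}I$ is Fredholm of index zero with kernel $\mathrm{span}(\psi_k)$, so the identity $(\mathbb{A}_0-\lambda_k^{(0)}I)f=Ff$ forces the compatibility condition $\langle\psi_k,Ff\rangle=\sum_j c_j\,a_{k,j}=0$, where $c_j=\langle\phi_j,f\rangle$. Reconstructing $f$ from the $c_j$ through the pseudo–inverse of $\mathbb{A}_0-\lambda_k^{(0)}I$ and feeding it back into $c_j=\langle\phi_j,f\rangle$, a short parity computation shows: for odd $k$, necessarily $c_2=0$ and $f$ is even once $a_{k,2}\ne0$, and the reduced $2\times 2$ system for $(c_1,c_3)$ is singular precisely when $S_1(1/\lambda_k^{(0)})S_3(1/\lambda_k^{(0)})-S_{1,3}^2(1/\lambda_k^{(0)})=0$; for even $k$ one is led to $S_2(1/\lambda_k^{(0)})=0$ or to the singularity of a reduced system built from $a_{k,1},a_{k,3}$ and the regular parts of $S_1,S_3,S_{1,3}$ at $1/\lambda_k^{(0)}$. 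Ruling these out needs two inputs: (i) the $a_{j,k}$ have closed forms as Gaussian–Hermite integrals and are nonzero for every $j$ of the appropriate parity, so $\psi_k$ is never orthogonal to all of $\phi_1,\phi_2,\phi_3$; and (ii) the reduced determinants $S_2$ and $S_1S_3-S_{1,3}^2$ do not vanish at any point $1/\lambda_k^{(0)}$.

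I expect (ii) to be the main obstacle. Strict monotonicity of the resolvent quadratic forms $\mu\mapsto\langle g,(\mathbb{A}_0-\mu I)^{-1}g\rangle$ between consecutive poles pins each perturbed eigenvalue strictly between two consecutive eigenvalues of $\mathbb{A}_0$ of the same parity, which already excludes coincidence with those; but the eigenvalues of $\mathbb{A}_0$ of the opposite parity lie in the very same interval, and ruling out coincidence with them amounts to showing that sums of the form $\sum_{j}a_{j,k}^2/(\lambda_j^{(0)}-\lambda_m^{(0)})$ never equal $1$ — a genuinely arithmetic statement that has to be read off from the explicit formulas for $(a_{j,k})$ and $(\lambda_j^{(0)})$. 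Everything else is the routine resolvent and parity bookkeeping sketched above.
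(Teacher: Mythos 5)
Your treatment of the first assertion is sound and is essentially the paper's route: the paper obtains the factorization $D(\lambda)=d(\lambda)\det(\cdot)$ by citing Theorem 2.2 of Sukhatme (1972) for a kernel of the form $K_0-\sum_j\phi_j\otimes\phi_j$, whereas you rederive that rank-three perturbation determinant from scratch via the resolvent factorization $\mathbb{A}-\tfrac1\lambda I=(\mathbb{A}_0-\tfrac1\lambda I)\bigl(I-(\mathbb{A}_0-\tfrac1\lambda I)^{-1}F\bigr)$; the parity observation $a_{j,2}=0$ for even $j$, $a_{j,1}=a_{j,3}=0$ for odd $j$ (equivalently $a_{j,1}a_{j,2}=a_{j,2}a_{j,3}=0$ for all $j$), which kills the off-diagonal entries $S_{1,2}$ and $S_{2,3}$ and block-diagonalizes the $3\times3$ matrix, is exactly the one the paper uses. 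This part of your argument is correct and, if anything, more self-contained than the published proof.

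The genuine gap is in the second assertion, and you name it yourself without closing it. Showing that no $\lambda=1/\lambda_k^{(0)}$ solves \eqref{eq:FredDet} requires, after the simple zero of $d$ cancels the (at most simple, by the parity-induced cancellation of the double pole in $S_1S_3-S_{1,3}^2$) pole of the determinant factor, that the resulting limit is nonzero. Concretely one needs (i) $a_{k,2}\neq 0$ for every odd $k$ and $(a_{k,1},a_{k,3})\neq(0,0)$ for every even $k$ — the paper only records which coefficients \emph{vanish}, but the non-vanishing of the others is equally essential and is what the Gaussian–Hermite integrals must deliver — and (ii) the non-vanishing of the complementary block at $\mu=\lambda_k^{(0)}$, e.g.\ $\bigl(S_1S_3-S_{1,3}^2\bigr)(1/\lambda_k^{(0)})\neq0$ for odd $k$. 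Your proposal explicitly declares (ii) ``the main obstacle'' and leaves it as an unproved arithmetic statement about sums $\sum_j a_{j,\ell}^2/(\lambda_j^{(0)}-\lambda_k^{(0)})$, so the ``moreover'' part of the theorem is not established. For what it is worth, the paper itself dispatches this step in one sentence (``Consequently, \dots'') on the strength of the parity facts together with the cited theorem of Sukhatme; your analysis makes visible that the parity facts alone do not suffice, so to complete the argument you would either have to verify (i) and (ii) directly from the explicit formulas for $a_{j,k}$ and $\lambda_j^{(0)}$, or invoke the precise hypotheses and conclusion of Sukhatme's Theorem 2.2 as the paper does.
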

\begin{proof}
Since $a_{j,1}a_{j,2}=a_{j,2}a_{j,3}=0$ holds for all $j=0,1,2,\ldots$, we use Theorem 2.2 of \cite{S:1972}
to see that the Fredholm determinant for the eigenvalue problem takes the form
\begin{equation*}
D(\lambda)=d(\lambda)\det\left(\begin{array}{ccc} S_{1}(\lambda) & 0 & S_{1,3}(\lambda) \\ 0 & S_2(\lambda) & 0 \\ S_{1,3}(\lambda) & 0 & S_3(\lambda)\end{array}\right)=d(\lambda)S_2(\lambda)\left(S_{1}(\lambda)S_3(\lambda)-S_{1,3}^2(\lambda)\right).
\end{equation*}
Hence, the reciprocals of the roots of $D(\lambda)$ are the eigenvalues of $\mathbb{A}$. By direct calculation, it follows that $a_{j,2}=0$ if $j$ is even,
and we have $a_{j,1}=a_{j,3}=0$ if $j$ is odd. Consequently, none of the reciprocals of the eigenvalues $\lambda_k^{(0)}$ is a root
of $D(\lambda)$ and thus a solution of the eigenvalue problem associated with the kernel $K$.
\end{proof}
\noindent
According to Theorem \ref{thm:Eigprob}, the eigenvalues of $\mathbb{A}$ are the roots of
$S_2(\lambda)$ and of $S_{1}(\lambda)S_3(\lambda)-S_{1,3}^2(\lambda)$. The reciprocals of the roots have been obtained numerically,
and the first twenty eigenvalues are displayed in Table \ref{tab:EV} for different values of $\beta$. Note that, since these values tend to be very small,
the reciprocal approach used here leads to numerically stable procedures to find the roots of the Fredholm determinant.
\begin{table}[t!]
\centering
\begin{tabular}{r|rrrrr}

$\lambda_j\backslash \beta$ & 0.25 & 0.5 & 1 & 2 & 3 \\
  \hline
$\lambda_0$ & 4.07235E-04 & 1.01443E-02 & 7.42748E-02 & 1.54164E-01 & 1.59960E-01 \\
  $\lambda_1$ & 3.96229E-05 & 2.98027E-03 & 4.48104E-02 & 1.29257E-01 & 1.45877E-01 \\
  $\lambda_2$ & 8.87536E-07 & 2.13968E-04 & 8.41907E-03 & 4.99665E-02 & 7.56703E-02 \\
  $\lambda_3$ & 7.41169E-08 & 5.45396E-05 & 4.58684E-03 & 3.98239E-02 & 6.69664E-02 \\
  $\lambda_4$ & 2.36367E-09 & 5.42325E-06 & 1.07998E-03 & 1.70946E-02 & 3.68745E-02 \\
  $\lambda_5$ & 1.81032E-10 & 1.27337E-06 & 5.51939E-04 & 1.31547E-02 & 3.19372E-02 \\
  $\lambda_6$ & 6.72990E-12 & 1.46554E-07 & 1.45739E-04 & 6.00412E-03 & 1.82413E-02 \\
  $\lambda_7$ & 4.87430E-13 & 3.26023E-08 & 7.12110E-05 & 4.49725E-03 & 1.55292E-02 \\
  $\lambda_8$ & 1.97617E-14 & 4.08130E-09 & 2.01821E-05 & 2.14175E-03 & 9.10854E-03 \\
  $\lambda_9$ & 1.37638E-15 & 8.73898E-10 & 9.53839E-06 & 1.56980E-03 & 7.64324E-03 \\
  $\lambda_{10}$ & 5.90134E-17 & 1.15555E-10 & 2.83684E-06 & 7.71666E-04 & 4.57714E-03 \\
  $\lambda_{11}$ & 3.99302E-18 & 2.40495E-11 & 1.30684E-06 & 5.55566E-04 & 3.79317E-03 \\
  $\lambda_{12}$ & 1.78040E-19 & 3.30498E-12 & 4.02503E-07 & 2.79917E-04 & 2.31011E-03 \\
  $\lambda_{13}$ & 1.17813E-20 & 6.72882E-13 & 1.81702E-07 & 1.98509E-04 & 1.89295E-03 \\
  $\lambda_{14}$& 5.40732E-22 & 9.51525E-14 & 5.74665E-08 & 1.01995E-04 & 1.16876E-03 \\
  $\lambda_{15}$ & 3.51545E-23 & 1.90367E-14 & 2.55205E-08 & 7.13642E-05 & 9.46891E-04 \\
  $\lambda_{16}$ & 1.64986E-24 & 2.75201E-15 & 8.24056E-09 & 3.71939E-05 & 5.90055E-04 \\
  $\lambda_{17}$ & 1.05731E-25 & 5.42770E-16 & 3.61045E-09 & 2.56128E-05 & 4.70590E-04 \\
  $\lambda_{18}$ & 6.60890E-28 & 1.07411E-17 & 1.18543E-09 & 2.86884E-06 & 8.19769E-05 \\
  $\lambda_{19}$ & 7.53533E-29 & 3.77859E-18 & 5.13526E-10 & 3.67008E-06 & 1.22699E-05
\end{tabular}
\caption{Eigenvalues of $\mathbb{A}$ for different tuning parameters $\beta$, here E-j stands as usually for $10^{-j}$.}\label{tab:EV}
\end{table}

\section{Accuracy of the numerical solutions}\label{sec:Acc}
The accuracy of the values presented in Table 1 may  be judged by a comparison  with results of \cite{H:1990}. That paper gives the first
four cumulants of  the distribution of $T_\infty$ in the special case $\beta=1$. The $m$-th cumulant of $T_\infty$ is
\[
\kappa_m(\beta) = 2^{m-1} (m-1)! \sum_{j=1}^\infty \lambda_j^m = 2^{m-1} (m-1)! \int_{-\infty}^\infty K_{m}(x,x) \varphi_\beta(x) \, \text{d} x, \quad m \ge 1,
\]
where $K_1(x,y) := K(x,y)$ and
\[
K_m(x,y) = \int_{-\infty}^\infty K_{m-1}(x,z)K(z,y) \varphi_\beta(z) \, \text{d} z
\]
for $m \ge 2$ (see e.g., Chapter 5 of \cite{SW:1986}). We have
\begin{eqnarray}\label{sumev}
\kappa_1(1)  & = & \mathbb{E}(T_\infty) =  \sum_{j=1}^\infty \lambda_j = 1 - \frac{\sqrt{3}}{2} = 0.133974596\ldots,\\ \nonumber
\kappa_2(1)  & = & \mathbb{V}(T_\infty) = 2 \sum_{j=1}^\infty \lambda_j^2 = \frac{2\sqrt{5}}{5} + \frac{5}{6} - \frac{155 \sqrt{2}}{128} = 0.015236301\ldots
\end{eqnarray}
and thus
\begin{equation*}
\sum_{j=1}^\infty \lambda_j^2 = \frac{1}{\sqrt{5}} + \frac{5}{12} - \frac{155}{128\sqrt{2}} = 0.0076181509 \ldots
\end{equation*}
Furthermore,
\begin{eqnarray}\label{sumev2}
\kappa_3(1)  & = & \mathbb{E}(T_\infty-\kappa_1(1))^3 = 8 \sum_{j=1}^\infty \lambda_j^3 = 0.00400343\ldots,\\ \nonumber
\kappa_4(1)  & = & \mathbb{E}(T_\infty-\kappa_1(1))^4 = 48 \sum_{j=1}^\infty \lambda_j^4 = 0.001654655\ldots
\end{eqnarray}
and thus
\begin{equation*}
\sum_{j=1}^\infty \lambda_j^3 = 0.0005004285291\ldots\quad\mbox{and}\quad\sum_{j=1}^\infty \lambda_j^4 = 0.00003447197917\ldots
\end{equation*}
From Table 2, we see that the corresponding sums of the first 20 numerical values resp. of their squares and cubes agree approximately with the values figuring in \eqref{sumev} and \eqref{sumev2}, respectively, up to five significant digits in most cases.

The results of \cite{H:1990} have been partially generalized in \cite{HW:1997}, Theorem 2.3, for the first three cumulants and a fixed tuning parameter $\beta$,
and they thus lead to general formulae in the univariate case. For the sake of completeness, we restate the formulae of the first two cumulants here. For the first cumulant, we have
\begin{equation*}
\kappa_1(\beta) = 1-(2\beta^2+1)^{-1/2}\left[1+\frac{\beta^2}{2\beta^2+1}+\frac{3\beta^4}{(2\beta^2+1)^2}\right],
\end{equation*}
and the second cumulant is
{\small
\begin{eqnarray*}
\kappa_2(\beta) & = & \frac{2}{\sqrt{1+4\beta^2}}+\frac2{1+2\beta^2}\left[1+\frac{2\beta^4}{(1+2\beta^2)^2}+\frac{9\beta^8}{4(1+2\beta^2)^4}\right]\\
&&-\frac{4}{\sqrt{1+4\beta^2+3\beta^4}}\left[1+\frac{3\beta^4}{2(1+4\beta^2+3\beta^4)}+\frac{3\beta^8}{2(1+4\beta^2+3\beta^4)^2}\right].
\end{eqnarray*}
The formula for the third cumulant is found in \cite{HW:1997}, Theorem 2.3, for the case $d=1$.
Table \ref{tab:comp} exhibits the normalized cumulants, together with the corresponding sums of the first 20 eigenvalues taken from Table \ref{tab:EV}.
We stress that by now no formula for the fourth cumulant is known in the literature for general tuning parameter $\beta$.

\begin{table}[t]
\centering
\begin{tabular}{r|rrrrr}
 $\beta$ & 0.25 & 0.5 & 1 & 2 & 3 \\
  \hline
  $\sum_{j=0}^{19}\lambda_j$ & 4.47822E-04 & 1.34000E-02 & 1.33975E-01 & 4.19722E-01 & 5.83761E-01 \\
  $\sum_{j=0}^{19}\lambda_j^2$ & 1.67411E-07 & 1.11838E-04 & 7.61814E-03 & 4.50863E-02 & 6.02196E-02 \\
  $\sum_{j=0}^{19}\lambda_j^3$ & 6.75980E-11 & 1.04392E-06 & 5.00428E-04 & 6.01902E-03 & 8.02464E-03 \\
  $\sum_{j=0}^{19}\lambda_j^4$ & 2.75053E-14 & 1.06687E-08 & 3.44719E-05 & 8.52858E-04 & 1.16350E-03 \\
  \hline
  $\kappa_1(\beta)$ & 4.47822E-04 & 1.34000E-02 & 1.33975E-01 & 4.19753E-01 & 5.84700E-01 \\
  $\kappa_2(\beta)/2$ & 1.66500E-07 & 1.11838E-04 & 7.61814E-03 & 4.50863E-02 & 6.02202E-02 \\
  $\kappa_3(\beta)/8$ & $\ast$ & 1.07115E-06 & 5.00429E-04 & 6.01903E-03 & 8.02468E-03 \\
\end{tabular}
\caption{Sums over different powers of the first 20 eigenvalues and corresponding theoretical cumulants for different values of $\beta$. The entry denoted by $\ast$ could not be computed due to numerical instabilities.}\label{tab:comp}
\end{table}


\section{Pearson system fit for approximation of critical values}\label{sec:Pearson}
The first four cumulants  can directly be used in packages that implement the Pearson system of distributions (see Section 4.1 of \cite{JKB:1994}).
 In the statistical computing language \texttt{R} (see \cite{R:2021}), we use the package \texttt{PearsonDS} (see \cite{BK:2017})
 to approximate critical values of the Epps-Pulley test statistic. The Epps-Pulley test is implemented in the \texttt{R}-Package \texttt{mnt} (see \cite{BE:2020})
 by using the function \texttt{BHEP}. Table \ref{tab:CV} shows simulated empirical critical values of the Epps-Pulley statistic
 for sample sizes $n\in\{10,25,50,100,200\}$ and levels of significance $\alpha\in\{0.1,0.05,0.01\}$.
 For each combination of $n$ and $\beta$, the entries corresponding to different values of $\alpha$  are based on $10^6$ replications under the null hypothesis.
 Each entry in a row named '$\infty$' is the calculated ($1-\alpha$)-quantile of the fitted Pearson system using the cumulants given in Table \ref{tab:comp}.
 We conclude that, for larger sample sizes,  the simulated critical values are close to the approximated counterparts of the Pearson system.
 Moreover, we have corroborated the results of \cite{H:1990} for the special case $\beta=1$, and we have extended these results for general $\beta>0$.

\begin{table}[t]
\centering
\begin{tabular}{c|r|rrrrr}
$\alpha$& $n\backslash \beta$ & 0.25 & 0.5 & 1 & 2 & 3 \\
  \hline
\multirow{6}{*}{0.1} & 10  & 7.28E-04  & 0.0245 & 0.277 & 0.817 & 1.03 \\
                     & 25  & 9.58E-04  & 0.0289 & 0.288 & 0.814 & 1.04 \\
                     & 50  & 1.05E-03  & 0.0304 & 0.289 & 0.811 & 1.04 \\
                     & 100 & 1.10E-03  & 0.0310 & 0.290 & 0.812 & 1.04 \\
                     & 200 & 1.13E-03  & 0.0314 & 0.291 & 0.811 & 1.04 \\
                     & $\infty$ & 1.14E-03 & 0.0319 & 0.292 & 0.812 & 1.04 \\\hline
\multirow{6}{*}{0.05} & 10  & 1.06E-03 & 0.0343 & 0.355 & 0.99 & 1.22 \\
                      & 25  & 1.39E-03 & 0.0403 & 0.371 & 1.00 & 1.25 \\
                      & 50  & 1.51E-03 & 0.0420 & 0.374 & 1.01 & 1.25 \\
                      & 100 & 1.57E-03 & 0.0427 & 0.376 & 1.01 & 1.25 \\
                      & 200 & 1.60E-03 & 0.0429 & 0.378 & 1.01 & 1.25 \\
                      & $\infty$ & 1.61E-03 & 0.0429 & 0.379 & 1.01 & 1.25 \\\hline  
\multirow{6}{*}{0.01} &  10     & 1.91E-03 & 0.0589 & 0.543 & 1.39 & 1.65 \\
                      &  25     & 2.58E-03 & 0.0696 & 0.570 & 1.44 & 1.72 \\
                      &  50     & 2.75E-03 & 0.0711 & 0.575 & 1.45 & 1.74 \\
                      &  100    & 2.78E-03 & 0.0720 & 0.581 & 1.46 & 1.75 \\
                      &  200    & 2.78E-03 & 0.0717 & 0.585 & 1.46 & 1.75 \\
                      & $\infty$ & 2.74E-03 & 0.0700 & 0.585 & 1.46 & 1.74
\end{tabular}
\caption{Empirical critical values (simulated with $10^6$ replications)  and approximated critical values by the Pearson system for different
 levels of significance $\alpha$}\label{tab:CV}
\end{table}

\section{Conclusions}
   We have solved the eigenvalue problem of the integral operator associated with the covariance kernel $K$ of the limiting Gaussian process
   that occurs in the limit null distribution of the Epps-Pulley test statistic. In view of a comparison with the first three known cumulants from the literature,
   Table \ref{tab:comp} shows that the eigenvalues obtained by  numerical methods are very close to the corresponding theoretical values.
   In Section 5 of \cite{EH:2021}, the authors present a Monte Carlo based approximation method to find stochastic approximations of the eigenvalues.
   A comparison of Table \ref{tab:EV} and Table 1 of \cite{EH:2021} reveals that there are some significant differences for some values of $\beta$.
   This observation is of particular interest, since the largest eigenvalue is used in the derivations of approximate Bahadur efficiencies.
   Recent results concerning this topic for the Epps-Pulley test are presented in \cite{EH:2021} and, for other normality tests based on the empirical distribution function, in \cite{MNO:2021}.

   We finally point out the difficulties encountered if one tries to generalize our findings to the multivariate case, i.e. to obtain the eigenvalues associated with the limit null distribution of the BHEP test of multivariate normality, see \cite{BH:1988,HW:1997,HZ:1990}. The $d$-variate analog to the covariance kernel $K$ in \eqref{eq:KHW} is given in Theorem 2.1 of \cite{HW:1997}, namely, writing $\|\cdot\|$ for the Euclidean norm and $^\top$ for the transpose of vectors, we have
   \begin{equation}\label{eq:multK}
   K(s,t)=\exp\left(-\frac{\|s-t\|^2}{2}\right)-\left\{1+s^\top t+\frac{(s^\top t)^2}{2}\right\}\exp\left(-\frac{\|s\|^2+\|t\|^2}{2}\right),\quad s,t\in\R^d.
   \end{equation}
   The first step is to derive explicit expressions for eigenvalues w.r.t. the kernel $K_0(s,t)=\exp\left(-\|s-t\|^2/2\right)$, $s,t\in\R^d$. The second step is to find the corresponding multivariate representation of \eqref{eq:kernelrep}, which seems to be non-standard, since the quadratic summand $(s^\top t)^2$ in \eqref{eq:multK} does not factorize easily. Both problems have to be solved in order to successfully apply the method presented in Section \ref{sec:fredholm}.

\bibliography{lit_BHEP_EV}   

\begin{thebibliography}{10}

\bibitem{BDH:1989}
L.~Baringhaus, R.~Danschke, and N.~Henze.
\newblock Recent and classical tests for normality - a comparative study.
\newblock {\em Communications in Statistics -- Simulation and Computation},
  18:363--379, 1989.

\bibitem{BH:1988}
L.~Baringhaus and N.~Henze.
\newblock A consistent test for multivariate normality based on the empirical
  characteristic function.
\newblock {\em Metrika}, 35(1):339--348, 1988.

\bibitem{BK:2017}
M.~Becker and S.~Kl\"{o}{\ss}ner.
\newblock {\em PearsonDS: Pearson Distribution System}, 2017.
\newblock R package version 1.1.

\bibitem{BE:2020a}
S.~Betsch and B.~Ebner.
\newblock Testing normality via a distributional fixed point property in the
  {S}tein characterization.
\newblock {\em TEST}, 29(1):105--138, 2020.

\bibitem{BE:2020}
L.~Butsch and B.~Ebner.
\newblock {\em mnt: Affine Invariant Tests of Multivariate Normality}, 2020.
\newblock R package version 1.3.

\bibitem{CS:1989}
S.~Cs{\"o}rg\H{o}.
\newblock Consistency of some tests for multivariate normality.
\newblock {\em Metrika}, 36:107--116, 1989.

\bibitem{DAS:1986}
R.~B. D'Agostino and M.~A. Stephens, editors.
\newblock {\em Goodness-of-fit techniques}.
\newblock Statistics: Textbooks and Monographs ; 68. Dekker, New York, NY,
  1986.

\bibitem{EH:2020}
B.~Ebner and N.~Henze.
\newblock Tests for multivariate normality---a critical review with emphasis on
  weighted ${L}^2$-statistics.
\newblock {\em TEST}, 29(4):845--892, 2020.

\bibitem{EH:2021}
B.~Ebner and N.~Henze.
\newblock Bahadur efficiencies of the {E}pps--{P}ulley test for normality.
\newblock {\em arXiv eprint 2106.13962}, 2021.

\bibitem{EP:1983}
T.~W. Epps and L.~B. Pulley.
\newblock A test for normality based on the empirical characteristic function.
\newblock {\em Biometrika}, 70(3):723--726, 1983.

\bibitem{H:1990}
N.~Henze.
\newblock An approximation to the limit distribution of the {{E}}pps-{{P}}ulley
  test statistic for normality.
\newblock {\em Metrika}, 37(1):7--18, 1990.

\bibitem{HW:1997}
N.~Henze and T.~Wagner.
\newblock A new approach to the {{BHEP}} tests for multivariate normality.
\newblock {\em Journal of Multivariate Analysis}, 62(1):1--23, 1997.

\bibitem{HZ:1990}
N.~Henze and B.~Zirkler.
\newblock A class of invariant consistent tests for multivariate normality.
\newblock {\em Communications in Statistics - Theory and Methods},
  19(10):3595--3617, 1990.

\bibitem{JKB:1994}
N.~L. Johnson, S.~Kotz, and N.~Balakrishnan.
\newblock {\em Continuous univariate distributions}, volume~1.
\newblock Wiley, New York, 2. edition, 1994.

\bibitem{MNO:2021}
B.~Milo\v{s}evi\'{c}, Y.~Y. Nikitin, and M.~Obradovi\'{c}.
\newblock Bahadur efficiency of edf based normality tests when parameters are
  estimated.
\newblock {\em arXiv eprint 2106.07437}, 2021.

\bibitem{R:2021}
{R Core Team}.
\newblock {\em R: A Language and Environment for Statistical Computing}.
\newblock R Foundation for Statistical Computing, Vienna, Austria, 2021.

\bibitem{RW:2008}
C.~E. Rasmussen and C.~K.~I. Williams.
\newblock {\em Gaussian processes for machine learning}.
\newblock Adaptive computation and machine learning. MIT Press, Cambridge,
  Mass., 2008.

\bibitem{SW:1986}
G.~R. Shorack and J.~A. Wellner.
\newblock {\em Empirical Processes with Applications to Statistics}.
\newblock Wiley, 1986.

\bibitem{S:1976}
M.~A. Stephens.
\newblock Asymptotic results for goodness-of-fit statistics with unknown
  parameters.
\newblock {\em The Annals of Statistics}, 4(2):357 -- 369, 1976.

\bibitem{S:1977}
M.~A. Stephens.
\newblock Goodness of fit for the extreme value distribution.
\newblock {\em Biometrika}, 64(3):583--588, 1977.

\bibitem{S:1972}
S.~Sukhatme.
\newblock Fredholm determinant of a positive definite kernel of a special type
  and its application.
\newblock {\em The Annals of Mathematical Statistics}, 43(6):1914 -- 1926,
  1972.

\bibitem{T:2002}
H.~C. Thode.
\newblock {\em Testing for normality}.
\newblock Statistics: textbooks and monographs ; 164. Dekker, New York, 2002.

\bibitem{ZWRM:1997}
H.~Zhu, C.~K. Williams, R.~Rohwer, and M.~Morciniec.
\newblock Gaussian regression and optimal finite dimensional linear models.
\newblock 1997.

\end{thebibliography}
\bibliographystyle{abbrv}      

\vspace{5mm}
\noindent
B. Ebner and N. Henze, \\
Institute of Stochastics, \\
Karlsruhe Institute of Technology (KIT), \\
Englerstr. 2, D-76133 Karlsruhe. \\
E-mail: {\texttt Bruno.Ebner@kit.edu}\\
E-mail: {\texttt Norbert.Henze@kit.edu}

\end{document}